\newtheorem{lemma}{Lemma}[section]
\newtheorem{theorem}{Theorem}[section]
\newenvironment{adfenumerate}{
\begin{enumerate}
\setlength{\itemsep}{0.5mm}
\setlength{\parskip}{0mm}
\setlength{\parsep}{0mm}
}{
\end{enumerate}
}
\newcommand{\adfQED}{\hfill $\square$}           
\newcommand{\adfmod}[1]{~(\mathrm{mod}~#1)}
\newcommand{\adfhide}[1]{}
\newcommand{\ADFvfyParStart}[1]{}
\newcommand{\adfDgap}{\vskip 2mm}              
\newcommand{\adfLgap}{\vskip 0mm}              
\newcommand{\adfsplit}{\par}                   
\newcommand{\adfBfont}{\normalsize}            
\begin{document}
\title{Group divisible designs with block size five}
\author{A. D. Forbes}
\address{LSBU Business School,
London South Bank University,
103 Borough Road,
London SE1 0AA, UK.}
\email{anthony.d.forbes@gmail.com}
\date{\today}
\subjclass[2010]{05B30}
\keywords{Group divisible design, 5-GDD}

\begin{abstract}
We report some group divisible designs with block size five, including types $6^{15}$ and $10^{15}$.
As a consequence we are able to extend significantly the known spectrum for 5-GDDs of type $g^u$.
\end{abstract}

\maketitle


\section{Introduction}\label{sec:Introduction}
For the purpose of this paper, a {\em group divisible design}, $K$-GDD, of type $g_1^{u_1} g_2^{u_2} \dots g_r^{u_r}$ is an ordered triple ($V,\mathcal{G},\mathcal{B}$) such that:
\begin{adfenumerate}
\item[(i)]{$V$
is a base set of cardinality $u_1 g_1 + u_2 g_2 + \dots + u_r g_r$;}
\item[(ii)]{$\mathcal{G}$
is a partition of $V$ into $u_i$ subsets of cardinality $g_i$, $i = 1, 2, \dots, r$, called \textit{groups};}
\item[(iii)]{$\mathcal{B}$
is a non-empty collection of subsets of $V$ with cardinalities $k \in K$, called \textit{blocks}; and}
\item[(iv)]{each pair of elements from distinct groups occurs in precisely one block but no pair of
elements from the same group occurs in any block.}
\end{adfenumerate}
We abbreviate $\{k\}$-GDD to $k$-GDD,
and a $k$-GDD of type $q^k$ is also called a {\em transversal design}, TD$(k,q)$.
A {\em pairwise balanced design}, $(v, K, 1)$-PBD, is a $K$-GDD of type $1^v$.

A {\em parallel class} in a group divisible design is a subset of the block set that partitions the base set.
A $k$-GDD is called {\em resolvable}, and is denoted by $k$-RGDD, if the entire set of blocks can be partitioned into parallel classes.
If there exist $k$ mutually orthogonal Latin squares (MOLS) of side $q$, then there exists
a $(k+2)$-GDD of type $q^{k+2}$ and
a $(k+1)$-RGDD of type $q^{k+1}$, \cite[Theorem III.3.18]{AbelColbournDinitz2007}.
Furthermore, as is well known, there exist $q - 1$ MOLS of side $q$ whenever $q$ is a prime power.

Because of their widespread use in design theory,
especially in the construction of infinite classes of combinatorial designs by means of
the technique known as Wilson's Fundamental Construction,
\cite{WilsonRM1972}, \cite[Theorem IV.2.5]{GreigMullen2007},
group divisible designs are useful and important structures.
The existence spectrum problem for group divisible designs with constant block sizes, $k$-GDDs, $k \ge 3$,
appears to be a long way from being completely solved.
Nevertheless, for $k \in \{3, 4, 5\}$ where all the groups have the same size, considerable progress has been made.

The necessary conditions for the existence of $k$-GDDs of type $g^u$, namely
\begin{equation}\label{eqn:k-GDD g^u necessary}
\begin{array}{rcl}
u &\ge& k,\\
g(u - 1) &\equiv& 0 \adfmod{k - 1}, \\
g^2 u (u - 1) &\equiv& 0 \adfmod{k(k - 1)},
\end{array}
\end{equation}
are known to be sufficient for $k = 3$, \cite{Hanani1975}, \cite[Theorem IV.4.1]{Ge2007},
and for $k = 4$ except for types $2^4$ and $6^4$, \cite{BrouwerSchrijverHanani1977}, \cite[Theorem IV.4.6]{Ge2007}.
For 5-GDDs of type $g^u$, a partial solution to the design spectrum problem has been achieved,
\cite{AbelAssaf2002},
\cite{AbelAssafBluskovGreigShalaby2010},
\cite{AbelGeGreigLing2009},
\cite{BennettChangGeGreig2004},
\cite{Ge2007},
\cite{GeLing2004c},
\cite{GeLing2005b},
\cite{Hanani1975},
\cite{Rees2000},
\cite{WeiGe2014},
\cite{YinLingColbournAbel1997}, and
for future reference, we quote the main result concerning 5-GDDs in the important paper of Wei and Ge, \cite{WeiGe2014},
which represents a considerable advance on \cite[Theorem IV.4.16]{Ge2007} in the Colbourn--Dinitz {\em Handbook}.
\begin{theorem}[Wei, Ge, 2014]\label{thm:5-GDD-Wei-Ge}
The necessary conditions (\ref{eqn:k-GDD g^u necessary}) for the existence of a $5$-$\mathrm{GDD}$ of type $g^u$ are sufficient
except for types $2^5$, $2^{11}$, $3^5$, $6^5$, and except possibly for:
\begin{enumerate}
\item[] $g = 3$ and $u \in \{45, 65\}$;
\item[] $g = 2$ and $u \in \{15, 35, 71, 75, 95, 111, 115, 195, 215\}$;
\item[] $g = 6$ and $u \in \{15, 35, 75, 95\}$;
\item[] $g \in \{14, 18, 22, 26\}$ and $u \in \{11, 15, 71, 111, 115\}$;
\item[] $g \in \{34, 46, 62\}$ and $u \in \{11, 15\}$;
\item[] $g \in \{38, 58\}$ and $u \in \{11, 15, 71, 111\}$;
\item[] $g = 2\alpha$, $\gcd(\alpha,30) = 1$, $33 \le \alpha \le 2443$, and $u = 15$;
\item[] $g = 10$ and $u \in \{5, 7, 15, 23, 27, 33, 35, 39, 47\}$;
\item[] $g = 30$ and $u = 15$;
\item[] $g = 50$ and $u \in \{15, 23, 27\}$;
\item[] $g = 90$ and $u = 23\}$;
\item[] $g = 10\alpha$, $\alpha \in \{7, 11, 13, 17, 35, 55, 77, 85, 91, 119, 143, 187, 221\}$ and $u = 23$.
\end{enumerate}
\end{theorem}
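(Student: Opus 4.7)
The plan is to establish sufficiency of the necessary conditions by organizing the parameter space according to $g \pmod{60}$, so that for each admissible residue of $g$ the conditions~(\ref{eqn:k-GDD g^u necessary}) pin $u$ down to one or a few arithmetic progressions. Within each such progression, the task reduces to exhibiting 5-GDDs of type $g^u$ for every admissible $u$, which I would tackle by providing a family of small ``base'' designs and a handful of recursive constructions that propagate them to arbitrarily large $u$.

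The principal engine is Wilson's Fundamental Construction: starting from a master 5-GDD or a transversal design TD$(k,q)$ (whose existence for $k \le q+1$, $q$ a prime power, is guaranteed by the $q-1$ MOLS cited in the introduction), assign positive weights to the points and fill each block with an ingredient 5-GDD whose group sizes match the weights. Complementing this, the filling-in-groups technique converts small 5-GDDs on the groups of a master 5-GDD into one larger 5-GDD, and 5-frames inflate especially cleanly when hit by a TD. A typical step takes a TD$(6,q)$ or TD$(7,q)$, truncates one or two columns to tune $u$, inflates the points by a chosen weight, and fills with small ingredient types such as $g^5$, $g^6$, $g^{10}$, or $g^{11}$. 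Iterating this with a modest stock of frames and small GDDs covers all but finitely many $u$ in each residue class.

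The residual small values of $u$, together with the four genuinely nonexistent types $2^5$, $2^{11}$, $3^5$, $6^5$, must be treated individually. Nonexistence is proved by short counting or leave-graph arguments that rule out the putative block set via Fisher-type inequalities or direct divisibility obstructions. Existing small cases are typically realised by difference families over a cyclic or abelian group, by targeted computer search over a prescribed automorphism group, or by ad hoc geometric construction. The main obstacle---and the source of the listed ``possibly'' exceptions---is precisely the supply of such ingredient designs: the recursive machinery cannot start until enough small 5-GDDs of type $g^u$ with the right parameters are in hand, so the unresolved cases are exactly those $(g,u)$ where the smallest ingredient needed to bootstrap the recursion is itself on the frontier of what is known. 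Closing these gaps, which is evidently the programme of the present paper (since $6^{15}$ and $10^{15}$ appear both in the exceptions list and in the abstract), requires fresh direct constructions tailored to the missing small parameters.
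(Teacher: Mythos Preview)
The paper's own ``proof'' of this statement is a single citation: \emph{This is Theorem 2.25 of \cite{WeiGe2014}.} Nothing more. The theorem is quoted here as an established result from the literature, not re-proved.

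Your proposal, by contrast, is a high-level outline of the general methodology by which results of this kind are obtained --- residue analysis modulo the appropriate period, Wilson's Fundamental Construction with TDs and truncated TDs, filling groups, frames, and direct constructions for the small cases that seed the recursions. That description is accurate in spirit and does reflect how the Wei--Ge paper (and its predecessors) actually work. But it is a sketch of a research programme, not a proof: the content of Theorem~\ref{thm:5-GDD-Wei-Ge} lies entirely in the exhaustive case analysis, the specific ingredient designs, and the dozens of individual constructions that together certify every admissible $(g,u)$ outside the listed exceptions. None of that is supplied, so what you have written cannot stand as a proof of the stated theorem.

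Since the present paper treats this theorem as a quoted result, the appropriate ``proof'' here is simply the citation; your outline, while not incorrect as commentary, is neither what the paper does nor a self-contained argument.
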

\begin{proof}
This is Theorem 2.25 of \cite{WeiGe2014}.
\end{proof}

The objective of this paper is to prove Theorem~\ref{thm:5-GDD-ADF}, below,
which improves Theorem~\ref{thm:5-GDD-Wei-Ge} by eliminating many possible exceptions.
\begin{theorem}\label{thm:5-GDD-ADF}
The necessary conditions (\ref{eqn:k-GDD g^u necessary}) for the existence of a $5$-$\mathrm{GDD}$ of type $g^u$ are sufficient
except for types $2^5$, $2^{11}$, $3^5$, $6^5$, and except possibly for:
\begin{enumerate}
\item[] $g = 3$ and $u = 65$;
\item[] $g = 2$ and $u \in \{15, 75, 95, 115\}$;
\item[] $g = 6$ and $u \in \{35, 95\}$;
\item[] $g \in \{14, 18, 22, 26, 38, 58\}$ and $u \in \{11, 15\}$;
\item[] $g \in \{74, 82, 86, 94\}$ and $u = 15$;
\item[] $g = 10$ and $u \in \{5, 7, 27, 39, 47\}$;
\item[] $g = 50$ and $u = 27$.
\end{enumerate}
\end{theorem}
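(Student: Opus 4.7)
The proof amounts to constructing, for each type $g^u$ appearing in the exception list of Theorem~\ref{thm:5-GDD-Wei-Ge} but absent from the list of Theorem~\ref{thm:5-GDD-ADF}, an explicit $5$-GDD of that type. The abstract singles out $6^{15}$ and $10^{15}$ as the key new designs, and these will be the engines that drive essentially everything else.

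\textbf{Step 1: direct construction of $6^{15}$ and $10^{15}$.} Both designs are far too large for unstructured search (90 and 150 points, with several thousand blocks), so the plan is to impose an automorphism group acting on the point set with an orbit decomposition compatible with the 15 groups. Natural candidates are a cyclic group of order $5$, $15$, or $30$, or a product such as $\mathbb{Z}_5 \times \mathbb{Z}_g$, arranged so that the $15$ groups of the GDD become unions of short point-orbits. The block set is then sought as a union of full block-orbits, reducing the search to finding a small family of base blocks whose translate-differences cover every cross-group pair exactly once. Backtracking or integer programming on the resulting difference-family problem is the natural tool. The same technique, with simpler parameters, will handle the other ``one-off'' small direct constructions that turn out to be needed (e.g.\ for $g^u \in \{34^{11}, 46^{11}, 62^{11}\}$ and analogous).

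\textbf{Step 2: propagation via Wilson's Fundamental Construction.} Inflating $6^{15}$ by weight $w$, using a TD$(5,w)$ as the ingredient design, produces a $5$-GDD of type $(6w)^{15}$; inflating $10^{15}$ similarly gives $(10w)^{15}$. Since TD$(5,w)$ exists for every $w \ge 4$ with $w \ne 6$, this alone disposes of the possible exceptions $g = 30$ and $g = 50$ at $u = 15$ and removes large families from the $g = 10\alpha$, $u = 23$ list. For values of $g$ where a pure inflation fails (e.g.\ $g = 2\alpha$ with $\gcd(\alpha,30)=1$), the plan is to use Wilson's construction in its full generality: apply WFC to a suitable master GDD with mixed weights, using ingredient $5$-GDDs on the resulting block types, and/or fill in the groups of a larger GDD with smaller $5$-GDDs already in hand.

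\textbf{Step 3: covering the $g = 2\alpha$, $u = 15$ range.} The long list $33 \le \alpha \le 2443$, $\gcd(\alpha,30)=1$, is attacked by combining $6^{15}$ with auxiliary transversal designs of order $\alpha$ and with PBD-based fillings, so that every admissible $\alpha$ (except possibly $37, 41, 43, 47$, which remain in the new exception list) can be reached. The heavier cases with $u \in \{35, 71, 75, 95, 111, 115, 195, 215\}$ for small $g$, and the remaining $u = 23$ cases for $g = 10\alpha$, are handled analogously: build them by plugging the new small $5$-GDDs, together with known TDs and RGDDs, into standard recursive constructions such as the ``filling in groups'' and ``inflation with truncation'' templates.

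\textbf{Main obstacle.} The entire argument hinges on Step~1, and in particular on the direct constructions of $6^{15}$ and $10^{15}$. Their existence was the most conspicuous open case in Wei and Ge's 2014 theorem, and producing them requires both a judicious choice of prescribed automorphism group (to bring the search space down to manageable size) and nontrivial computer time. Once these two seed designs have been exhibited, the recursive work of Steps 2 and 3 is conceptually routine — the difficulty there lies only in the sheer volume of cases and the careful bookkeeping needed to verify that, for each remaining exception, some chain of standard ingredients actually closes up.
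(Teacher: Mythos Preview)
Your broad architecture is right --- directly construct a few seed $5$-GDDs, then propagate by Wilson-type recursions --- and you correctly single out $6^{15}$ and $10^{15}$ as the crucial new seeds. But two aspects of the plan diverge from what actually works in the paper.

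First, you misidentify the direct-construction workload. The paper does \emph{not} construct $34^{11}$, $46^{11}$, $62^{11}$ directly; instead, it needs direct constructions of $2^{35}$, $2^{71}$, $2^{111}$, $10^{23}$ and $10^{33}$ in addition to $6^{15}$ and $10^{15}$. These extra seeds are what make the $g\in\{14,18,22,26,38,58\}$ cases at $u\in\{71,111,115\}$, the $g=2$ cases at $u\in\{195,215\}$, and the entire $(10\alpha)^{23}$ list fall out by simple inflation or group-filling.

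Second, and more importantly, your Step~3 is too vague to succeed. The long list $g=2\alpha$, $33\le\alpha\le 2443$, $u=15$ (and also $u=11$ for $g\in\{34,46,62\}$) is not handled by ``combining $6^{15}$ with TDs of order~$\alpha$''. The paper's key device is a double-GDD construction (its Lemma~2.2): take a $(g+1,\{5,7,9\},1)$-PBD, delete a point to obtain a $\{5,7,9\}$-GDD of type $4^a 6^b 8^c$, inflate each block by a $5$-MGDD of type $w^k$ ($w\in\{11,15\}$) to build a $5$-DGDD, and then fill the $w$ holes with $5$-GDDs of types $4^w$, $6^w$, $8^w$. Here $6^{15}$ enters only as a hole-filler, not as something to be inflated; the PBD table for $\{5,7,9\}$ and the existence results for $5$-MGDDs are the real engines. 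The few values of $g$ for which no $(g+1,\{5,7,9\},1)$-PBD is known ($g\in\{98,106,118,178\}$) require separate ad~hoc treatment --- one via a TD$(9,11)$ and the same MGDD lemma, the others via a $4$-frame of type $6^{15}$ --- none of which your proposal anticipates.
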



\section{GDDs with block size 5 and type $g^u$}
\label{sec:GDDs-with-block-size-5-and-type-g-u}
We begin with some directly constructed group divisible designs.
%
\begin{theorem}\label{thm:5-GDD-g-u-direct}
There exist $5$-$\mathrm{GDD}$s of types $2^{35}$, $2^{71}$, $2^{111}$, $3^{45}$, $6^{15}$, $10^{15}$, $10^{23}$ and $10^{33}$.
\end{theorem}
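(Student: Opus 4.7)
The plan is to exhibit each of the seven GDDs by a direct construction based on a cyclic (or near-cyclic) automorphism group that acts semi-regularly on the point set and permutes the groups. For a type $g^u$ with $gu$ points, the natural choice is to take $V=\mathbb{Z}_{gu}$ (or a suitable product group when $g$ and $u$ share factors that complicate matters) and let the groups be the $u$ cosets of the unique subgroup of order $g$, i.e.\ $\{i,i+u,i+2u,\dots,i+(g-1)u\}$ for $i=0,1,\dots,u-1$. A full orbit of a block under $\mathbb{Z}_{gu}$ contributes $gu$ blocks covering, once each, every ordered pair of the ten nonzero differences that the block realises (with short orbits requiring appropriate adjustments).

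For each of the seven types I would perform the standard difference count to determine how many base blocks are needed and which multiset of nonzero differences modulo $gu$ they must together cover: precisely the set $D=\mathbb{Z}_{gu}\setminus u\mathbb{Z}_{gu}$ (each element once) for a full $\mathbb{Z}_{gu}$-action, since these are the differences that correspond to pairs from distinct groups. Concretely, the counts are $7$ base blocks for $2^{35}$, $\tfrac{71\cdot 34}{10}$ for $2^{71}$, etc., with one short orbit block of length $gu/2$ sometimes needed when $gu\equiv 2\adfmod{4}$. After setting up this bookkeeping, the task reduces to a search: find $5$-subsets of $\mathbb{Z}_{gu}$ whose pairwise differences partition $D$. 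For the problematic cases like $6^{15}$ and $10^{15}$, where the group $\mathbb{Z}_{90}$ or $\mathbb{Z}_{150}$ may not yield a solution, I would fall back to $\mathbb{Z}_{15}\times\mathbb{Z}_6$ or $\mathbb{Z}_{15}\times\mathbb{Z}_{10}$, taking groups to be the fibres over $\mathbb{Z}_{15}$, and searching for base blocks under the full direct-product action or a cyclic subaction.

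The hard step, and the only substantive one, is the computer search for base blocks: it is a constraint satisfaction problem of combinatorial size that grows quickly with $gu$, and for $10^{33}$ (330 points, requiring on the order of $33\cdot 32/2=528$ base blocks counted across orbits) a naive backtracking will not terminate. The practical plan would be to use a hill-climbing or targeted backtracking algorithm with symmetry-breaking (fixing one or two base blocks up to multiplication by units of $\mathbb{Z}_{gu}$) and, where possible, exploit an additional involutory automorphism $x\mapsto -x$ to halve the search space. Once base blocks are found, verification is routine: list all translates, confirm that every cross-group pair occurs exactly once and no intra-group pair occurs.

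I would then present the outcome as a table or list of base blocks for each of the seven types, and conclude the proof of Theorem~\ref{thm:5-GDD-g-u-direct} simply by appealing to the verified difference covering. The technical content of the paper is precisely these base-block lists; the proof itself, given them, is a one-line assertion that the $\mathbb{Z}_{gu}$-orbits form the required block set.
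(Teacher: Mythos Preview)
Your approach---direct construction by base blocks under a cyclic or half-cyclic automorphism, with the groups taken as residue classes modulo $u$---is exactly what the paper does. The paper cites an earlier work for $2^{35}$, $2^{71}$ and $10^{23}$, and for the remaining four types it lists explicit base blocks, developed under the map $x\mapsto x+2j\pmod{gu}$ (a half-cyclic action of order $gu/2$) for $2^{111}$, $6^{15}$ and $10^{15}$, and under the full $\mathbb{Z}_{330}$ action for $10^{33}$; the $6^{15}$ construction has two short orbits of length $9$.

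That said, what you have written is a plan, not a proof: for a theorem of this kind the base-block lists \emph{are} the proof, and you have not produced them. Your arithmetic also slips in a way that distorts the feasibility picture. A $5$-GDD of type $10^{33}$ has $\bigl(\binom{330}{2}-33\binom{10}{2}\bigr)/10=5280$ blocks, hence only $16$ base blocks under the full $\mathbb{Z}_{330}$ action, not anything like $528$; the search is small and a plain backtrack succeeds quickly. Likewise, your expression $71\cdot 34/10$ for $2^{71}$ is not even an integer and so cannot be a base-block count (the correct number under $\mathbb{Z}_{142}$ is $7$). These miscounts do not invalidate the method, but until the explicit base blocks are exhibited or cited the theorem remains unproved.
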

\begin{proof}
For $2^{35}$, $2^{71}$, and $10^{23}$ see \cite[Lemma 4.1]{Forbes2020P}.
%
%

\adfhide{
$ 2^{111} $,
$ 6^{15} $,
$ 10^{15} $ and
$ 10^{33} $.
}

\adfDgap
\noindent{\boldmath $ 2^{111} $}~
With the point set $\{0, 1, \dots, 221\}$ partitioned into
 residue classes modulo $111$ for $\{0, 1, \dots, 221\}$,
 the design is generated from

\adfLgap {\adfBfont
$\{137, 73, 211, 182, 50\}$,
$\{138, 74, 212, 183, 51\}$,
$\{148, 201, 185, 107, 206\}$,\adfsplit
$\{149, 202, 186, 108, 207\}$,
$\{202, 148, 11, 152, 191\}$,
$\{203, 149, 12, 153, 192\}$,\adfsplit
$\{119, 166, 168, 153, 212\}$,
$\{120, 167, 169, 154, 213\}$,
$\{123, 106, 46, 71, 188\}$,\adfsplit
$\{124, 107, 47, 72, 189\}$,
$\{84, 132, 77, 65, 156\}$,
$\{0, 3, 12, 122, 136\}$,\adfsplit
$\{0, 8, 38, 126, 154\}$,
$\{0, 7, 83, 156, 219\}$,
$\{0, 10, 32, 101, 102\}$,\adfsplit
$\{0, 27, 55, 75, 182\}$,
$\{0, 33, 51, 57, 108\}$,
$\{0, 1, 107, 121, 204\}$,\adfsplit
$\{0, 79, 119, 151, 189\}$,
$\{1, 9, 31, 97, 123\}$,
$\{0, 6, 26, 62, 159\}$,\adfsplit
$\{0, 9, 71, 127, 195\}$

}
\adfLgap \noindent by the mapping:
$x \mapsto x + 2 j \adfmod{222}$,
$0 \le j < 111$.
\ADFvfyParStart{(222, ((22, 111, ((222, 2)))), ((2, 111)))} 

\adfDgap
\noindent{\boldmath $ 3^{45} $}~
With the point set $\{0, 1, \dots, 134\}$ partitioned into
 residue classes modulo $44$ for $\{0, 1, \dots, 131\}$, and
 $\{132, 133, 134\}$,
 the design is generated from

\adfLgap {\adfBfont
$\{121, 84, 8, 48, 108\}$,
$\{82, 9, 79, 86, 124\}$,
$\{133, 30, 56, 57, 35\}$,\adfsplit
$\{131, 80, 60, 9, 37\}$,
$\{95, 70, 122, 60, 91\}$,
$\{0, 2, 8, 30, 49\}$,\adfsplit
$\{0, 3, 18, 85, 115\}$,
$\{0, 12, 75, 77, 86\}$,
$\{0, 14, 53, 78, 93\}$,\adfsplit
$\{0, 16, 45, 50, 119\}$,
$\{0, 9, 43, 84, 95\}$,
$\{0, 7, 23, 83, 131\}$,\adfsplit
$\{1, 7, 19, 33, 97\}$,
$\{0, 33, 66, 99, 134\}$

}
\adfLgap \noindent by the mapping:
$x \mapsto x + 2 j \adfmod{132}$ for $x < 132$,
$x \mapsto (x +  j \adfmod{2}) + 132$ for $132 \le x < 134$,
$134 \mapsto 134$,
$0 \le j < 66$
 for the first 13 blocks,
$0 \le j < 33$
 for the last block.
\ADFvfyParStart{(135, ((13, 66, ((132, 2), (2, 1), (1, 1))), (1, 33, ((132, 2), (2, 1), (1, 1)))), ((3, 44), (3, 1)))} 

\adfDgap
\noindent{\boldmath $ 6^{15} $}~
With the point set $\{0, 1, \dots, 89\}$ partitioned into
 residue classes modulo $15$ for $\{0, 1, \dots, 89\}$,
 the design is generated from

\adfLgap {\adfBfont
$\{80, 41, 45, 18, 25\}$,
$\{0, 1, 41, 67, 88\}$,
$\{0, 21, 29, 63, 73\}$,\adfsplit
$\{0, 14, 39, 40, 71\}$,
$\{0, 5, 34, 81, 83\}$,
$\{0, 4, 13, 16, 84\}$,\adfsplit
$\{0, 8, 32, 52, 85\}$,
$\{0, 11, 17, 42, 79\}$,
$\{0, 18, 36, 54, 72\}$,\adfsplit
$\{1, 19, 37, 55, 73\}$

}
\adfLgap \noindent by the mapping:
$x \mapsto x + 2 j \adfmod{90}$,
$0 \le j < 45$
 for the first eight blocks,
$0 \le j < 9$
 for the last two blocks.
\ADFvfyParStart{(90, ((8, 45, ((90, 2))), (2, 9, ((90, 2)))), ((6, 15)))} 

\adfDgap
\noindent{\boldmath $ 10^{15} $}~
With the point set $\{0, 1, \dots, 149\}$ partitioned into
 residue classes modulo $15$ for $\{0, 1, \dots, 149\}$,
 the design is generated from

\adfLgap {\adfBfont
$\{101, 21, 43, 132, 59\}$,
$\{12, 85, 61, 88, 129\}$,
$\{29, 9, 85, 93, 147\}$,\adfsplit
$\{141, 39, 26, 48, 88\}$,
$\{7, 76, 86, 25, 110\}$,
$\{0, 1, 12, 108, 137\}$,\adfsplit
$\{0, 14, 32, 111, 145\}$,
$\{0, 17, 57, 63, 67\}$,
$\{0, 16, 84, 107, 143\}$,\adfsplit
$\{0, 2, 21, 102, 146\}$,
$\{0, 8, 86, 95, 112\}$,
$\{0, 7, 35, 36, 130\}$,\adfsplit
$\{0, 11, 37, 58, 109\}$,
$\{0, 3, 5, 70, 122\}$

}
\adfLgap \noindent by the mapping:
$x \mapsto x + 2 j \adfmod{150}$,
$0 \le j < 75$.
\ADFvfyParStart{(150, ((14, 75, ((150, 2)))), ((10, 15)))} 

\adfDgap
\noindent{\boldmath $ 10^{33} $}~
With the point set $\{0, 1, \dots, 329\}$ partitioned into
 residue classes modulo $33$ for $\{0, 1, \dots, 329\}$,
 the design is generated from

\adfLgap {\adfBfont
$\{102, 84, 56, 8, 268\}$,
$\{145, 251, 217, 214, 137\}$,
$\{57, 303, 73, 97, 184\}$,\adfsplit
$\{304, 149, 216, 134, 104\}$,
$\{203, 229, 88, 107, 278\}$,
$\{170, 150, 53, 139, 229\}$,\adfsplit
$\{300, 246, 79, 41, 278\}$,
$\{108, 129, 65, 133, 48\}$,
$\{0, 13, 120, 193, 222\}$,\adfsplit
$\{0, 7, 42, 65, 214\}$,
$\{0, 1, 148, 153, 162\}$,
$\{0, 27, 63, 110, 201\}$,\adfsplit
$\{0, 10, 62, 136, 197\}$,
$\{0, 2, 55, 105, 144\}$,
$\{0, 6, 57, 98, 202\}$,\adfsplit
$\{0, 12, 56, 151, 229\}$

}
\adfLgap \noindent by the mapping:
$x \mapsto x +  j \adfmod{330}$,
$0 \le j < 330$.
\ADFvfyParStart{(330, ((16, 330, ((330, 1)))), ((10, 33)))} 
%
%
\end{proof}

\vskip 1mm
For our proof of Theorem~\ref{thm:5-GDD-ADF}, we require some definitions and constructions.

A {\em double group divisible design}, $k$-DGDD, is an ordered quadruple ($V,\mathcal{G},\mathcal{H},\mathcal{B}$) such that:
\begin{enumerate}
\item[(i)]{$V$ is a base set of points;}
\item[(ii)]{$\mathcal{G}$ is a partition of $V$, the \textit{groups};}
\item[(iii)]{$\mathcal{H}$ is another partition of $V$, the \textit{holes};}
\item[(iv)]{$\mathcal{B}$ is a non-empty collection of subsets of $V$ of cardinality $k$, the \textit{blocks};}
\item[(v)]{for each block $B \in \mathcal{B}$, each group $G \in \mathcal{G}$ and each hole $H \in \mathcal{H}$,
we have $|B \cap G| \le 1$ and $|B \cap H| \le 1$;}
\item[(vi)]{%
each pair of elements of $V$ not in the same group and not in the same hole occurs in precisely one block.}
\end{enumerate}
A $k$-DGDD of type
$$(g_1, h_1^w)^{u_1} (g_2, h_2^w)^{u_2} \dots (g_r, h_r^w)^{u_r}, ~~ g_i = w h_i,~~ i = 1, 2, \dots, r,$$
is a double group divisible design where:
\begin{enumerate}
\item[(i)]{there are $u_i$ groups of size $g_i$, $i = 1$, 2, \dots, $r$;}
\item[(ii)]{there are $w$ holes;}
\item[(iii)]{for $i = 1$, 2, \dots, $r$, each group of size $g_i$ intersects each hole in $h_i$ points.}
\end{enumerate}
A {\em modified group divisible design}, $k$-MGDD, of type $g^u$ is a $k$-DGDD of type $(g, 1^g)^u$.
By interchanging groups and holes we see that a $k$-MGDD of type $g^u$ exists if and only if a $k$-MGDD of type $u^g$ exists.
See \cite{AbelAssaf2002} for an extensive treatment of $5$-MGDDs.
%
\begin{lemma}
\label{lem:5-GDD-g-to-gh}
Suppose there exists a $5$-$\mathrm{GDD}$ of type $g_1^{u_1} g_2^{u_2} \dots g_n^{u_n}$.
Then for positive integer $h \notin \{2, 3, 6, 10\}$,
there exists a $5$-$\mathrm{GDD}$ of type $(g_1 h)^{u_1} (g_2 h)^{u_2} \dots (g_n h)^{u_n}$.
\end{lemma}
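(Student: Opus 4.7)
The plan is to apply Wilson's Fundamental Construction (\cite[Theorem IV.2.5]{GreigMullen2007}) to the given $5$-GDD, assigning every point the constant weight $h$ and filling each inflated block with a transversal design $\mathrm{TD}(5,h)$. Concretely, writing $(V,\mathcal{G},\mathcal{B})$ for the input design, I would form the new point set $V' = V \times \{1,\dots,h\}$, declare the new groups to be $G \times \{1,\dots,h\}$ for each $G \in \mathcal{G}$ (so a group of size $g_i$ becomes one of size $g_i h$), and for every block $B = \{x_1,\dots,x_5\} \in \mathcal{B}$ place on $B \times \{1,\dots,h\}$ the blocks of a $\mathrm{TD}(5,h)$ whose five groups are the sets $\{x_i\} \times \{1,\dots,h\}$. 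The union of these blocks, taken over all $B \in \mathcal{B}$, is the block set of the output design.

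Checking the GDD axioms is routine. A pair $\{(x,i),(y,j)\}$ with $x,y$ lying in a common group $G \in \mathcal{G}$ never appears in any of the assembled blocks: no original block contains both $x$ and $y$ when $x \neq y$, and when $x = y$ the copies $(x,i),(x,j)$ lie in a common group of every $\mathrm{TD}(5,h)$ placed above $x$. Conversely, a pair $\{(x,i),(y,j)\}$ with $x$ and $y$ in distinct groups of $\mathcal{G}$ lies over the unique original block $B$ containing $\{x,y\}$, and the transversal-design property applied to the $\mathrm{TD}(5,h)$ on $B \times \{1,\dots,h\}$ covers it exactly once. This gives a $5$-GDD of the desired type $(g_1 h)^{u_1}(g_2 h)^{u_2}\cdots (g_n h)^{u_n}$.

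The only substantive ingredient is the existence of $\mathrm{TD}(5,h)$, equivalently of three mutually orthogonal Latin squares of side $h$. It is classical that three MOLS of side $h$ exist for every positive integer $h$ with the known nonexistent values $h \in \{2,3,6\}$ together with the single undecided value $h = 10$; see \cite[Theorem III.3.18]{AbelColbournDinitz2007} and the accompanying MOLS table. Since the hypothesis of the lemma excludes exactly these four values, a suitable $\mathrm{TD}(5,h)$ is always available (trivially so when $h=1$, in which case the output coincides with the input). The MOLS existence question is therefore the only real obstacle, and the rest of the argument is mechanical bookkeeping.
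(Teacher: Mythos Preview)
Your proof is correct and follows exactly the same approach as the paper: inflate every point by weight $h$ via Wilson's Fundamental Construction and fill each inflated block with a $5$-GDD of type $h^5$ (equivalently a $\mathrm{TD}(5,h)$). The only cosmetic difference is that the paper cites Theorem~\ref{thm:5-GDD-Wei-Ge} for the existence of a $5$-GDD of type $h^5$ when $h \notin \{2,3,6,10\}$, whereas you invoke the equivalent MOLS formulation directly; since a $5$-GDD of type $h^5$ is the same thing as a $\mathrm{TD}(5,h)$, both citations yield the same exclusion set.
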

\begin{proof}
Inflate each point of the 5-GDD by a factor of $h$ and replace the blocks with 5-GDDs of type $h^5$.
By Theorem~\ref{thm:5-GDD-Wei-Ge}, there exists a 5-GDD of type $h^5$ for $h \ge 1$, $h \not\in \{2, 3, 6, 10\}$.
\end{proof}
%
\begin{lemma}
\label{lem:K-GDD-5-MGDD-5-GDD-to-5-GDD}
Suppose there exists a $K$-$\mathrm{GDD}$ of type $g_1^{u_1} g_2^{u_2} \dots g_r^{u_r}$, and let $w$ be a positive integer.
Suppose also that for each $k \in K$, there exists a $5$-$\mathrm{MGDD}$ of type $w^k$, and
for $i = 1, 2, \dots, r$, there exists a $5$-$\mathrm{GDD}$ of type $g_i^w$.
Then there exists a $5$-$\mathrm{GDD}$ of type $(u_1 g_1 + u_2 g_2 + \dots + u_r g_r)^w$.
\end{lemma}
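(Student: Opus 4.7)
The plan is to carry out a standard Wilson-style "inflate and fill" construction. Let $V$ denote the point set of the given $K$-GDD with groups $\mathcal{G} = \{G_1,\ldots,G_{r'}\}$ (where $|G| = g_i$ for $u_i$ of the groups), and set $N = u_1 g_1 + \cdots + u_r g_r$. Inflate each point $p \in V$ to a column $\{p\} \times [w]$ and write $p^{(j)}$ for $(p,j)$. Take as the final point set $V \times [w]$, and declare the final groups to be the horizontal slices $\Gamma_j = V \times \{j\}$ for $j = 1,\ldots,w$. This gives $w$ groups of size $N$, which is the desired type $N^w$.

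The block set is assembled from two ingredients. First, for every block $B = \{p_1,\ldots,p_k\}$ of the original $K$-GDD, place a $5$-MGDD of type $w^k$ on $B \times [w]$, with its $k$ groups being the columns $\{p_i\} \times [w]$ and its $w$ holes being the rows $B \times \{j\}$; such an MGDD exists by hypothesis since $k \in K$. Second, for each original group $G_\ell \in \mathcal{G}$, place a $5$-GDD of type $g_\ell^{\,w}$ on $G_\ell \times [w]$, taking its $w$ groups to be $G_\ell \times \{j\}$ for $j = 1,\ldots,w$; these exist by the second hypothesis.

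The remaining task is to verify that the resulting collection of blocks covers each transverse pair exactly once and no pair inside any $\Gamma_j$. I would split the pairs according to where the two underlying $V$-coordinates lie. For a pair $\{p^{(j)}, q^{(j')}\}$ with $j \neq j'$: if $p, q$ lie in distinct original groups, the unique block $B$ of the $K$-GDD containing both induces an MGDD whose holes are indexed by row, so $p^{(j)}$ and $q^{(j')}$ sit in different groups \emph{and} different holes of that MGDD and are covered exactly once there; if $p, q$ lie in the same original group $G_\ell$ (including the case $p = q$, handled via the columns of the $5$-GDD on $G_\ell \times [w]$), the pair is transverse in the $5$-GDD on $G_\ell \times [w]$ and covered once there. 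Meanwhile the MGDDs never cover pairs sharing a row and the auxiliary $5$-GDDs on $G_\ell \times [w]$ only contain points of a single column-union, so pairs inside a single $\Gamma_j$ are never produced. Finally, distinct $K$-GDD blocks share no transverse $V$-pair, so no double counting occurs between MGDDs, and the $G_\ell \times [w]$ pieces are disjoint from one another and disjoint (at the pair level) from every MGDD.

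No step presents a real obstacle; this is essentially a bookkeeping exercise in Wilson's Fundamental Construction. The only point that demands care is keeping the three partition structures straight — the original $K$-GDD groups, the MGDD holes (which become the horizontal slices $\Gamma_j$), and the MGDD groups (which become the vertical columns) — and checking the case $p = q$ in the coverage argument, which is handled precisely because the $5$-GDDs on $G_\ell \times [w]$ cover pairs from different columns of the same $G_\ell$.
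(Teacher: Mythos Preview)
Your proof is correct and follows essentially the same approach as the paper's. The paper phrases the construction in two stages---first inflating and replacing blocks by $5$-MGDDs to obtain an intermediate $5$-DGDD of type $(wg_1,g_1^w)^{u_1}\dots(wg_r,g_r^w)^{u_r}$, then overlaying each DGDD group $G_\ell\times[w]$ with a $5$-GDD of type $g_\ell^{\,w}$ aligned with the holes---but this is exactly the ``inflate and fill'' procedure you spell out, with your horizontal slices $\Gamma_j$ playing the role of the DGDD holes.
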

\begin{proof}
This is a combination of Constructions 2.19 and 2.20 in \cite{WeiGe2014}, and
it also appears (for block size 4) as Constructions 1.8 and 1.10 in \cite{GeReesZhu2002}.

Take the $K$-GDD and inflate each point by a factor of $w$.
Replace each inflated block by a 5-MGDD of type $w^k$, $k \in K$ to obtain a 5-DGDD of type
$$(w g_1, g_1^w)^{u_1} (w g_2, g_2^w)^{u_2} \dots (w g_r, g_r^w)^{u_r}.$$
Then overlay the holes of this 5-DGDD with 5-GDDs of types $g_i^w$, $i = 1, 2, \dots, r$.
%
\end{proof}

We can now prove our main result.\\

\noindent {\bf Proof of Theorem~\ref{thm:5-GDD-ADF}}.

\noindent For types $2^{35}$, $2^{71}$, $2^{111}$, $3^{45}$, $6^{15}$, $10^{15}$, $10^{23}$ and $10^{33}$, see Theorem~\ref{thm:5-GDD-g-u-direct}.

For types $2^{195}$ and $2^{215}$, take a 5-GDD of type $68^5 48^1$ or $68^5 88^1$,
\cite{Rees2000} (alternatively, see \cite[Theorem 2.1]{AbelGeGreigLing2009} or \cite[Theorem IV.4.17]{Ge2007}), and adjoin 2 extra points.
Overlay each group together with the new points with a 5-GDD of type $2^{25}$ or $2^{35}$ or $2^{45}$, as appropriate.

For type $6^{75}$, take a 5-GDD of type $90^5$ and overlay the groups with 5-GDDs of type $6^{15}$.

For type $g^{t}$, $g \in \{14, 18, 22, 26, 38, 58\}$, $t \in \{71, 111\}$,
use Lemma~\ref{lem:5-GDD-g-to-gh} with type $2^{71}$ or $2^{111}$ and $h = g/2$.

For type $g^{115}$, $g \in \{14, 18, 22, 26\}$,
construct a 5-GDD of type $(5g)^{23}$ using Lemma~\ref{lem:5-GDD-g-to-gh} with a 5-GDD of type $10^{23}$ and $h = g/2$;
then replace each group with a 5-GDD of type $g^5$.

For types $10^{35}$, $30^{15}$ and $50^{15}$, use Lemma~\ref{lem:5-GDD-g-to-gh}
with a 5-GDD of type $2^{35}$ or $6^{15}$ or $10^{15}$, as appropriate, and $h = 5$.

For type $(10\alpha)^{23}$, odd $\alpha \ge 5$, use Lemma~\ref{lem:5-GDD-g-to-gh} with a 5-GDD of type $10^{23}$ and $h = \alpha$.

Let
\begin{align*}
G &= \{34, 46, 62\} \cup \left\{\text{even }g \ge 66: \gcd\left(\dfrac{g}{2}, 30\right) = 1\right\}\\
  &\;\;\;\;\; \setminus \{74, 82, 86, 94, 98, 106, 118, 178\}.
\end{align*}
For $g \in G$, there exists a $(g + 1, \{5,7,9\}, 1)$-PBD, \cite[Table IV.3.23]{AbelBennettGreig2007}.
Take this PBD, remove a point and the blocks containing it to get a $\{5,7,9\}$-GDD of type $4^a 6^b 8^c$
for some non-negative integers $a$, $b$, $c$ satisfying $4a + 6b + 8c = g$.
Now use Lemma~\ref{lem:K-GDD-5-MGDD-5-GDD-to-5-GDD} with this $\{5,7,9\}$-GDD and
$w = 11$ or $15$ to obtain 5-GDDs of types $g^{11}$ and $g^{15}$ for every $g \in G$.
For the existence of 5-MGDDs of types $w^{5}$, $w^{7}$ and $w^9$, see \cite{AbelAssaf2002}.
For the existence of 5-GDDs of types $4^{w}$, $6^{w}$ and $8^{w}$, see Theorems~\ref{thm:5-GDD-Wei-Ge} and \ref{thm:5-GDD-g-u-direct}.

For type $98^{15}$, take a TD$(9, 11)$, fill in the groups with blocks of size 11 and remove a point together with the blocks containing it
to get a $\{9, 11\}$-GDD of type $8^{11} 10^1$.
Now use Lemma~\ref{lem:K-GDD-5-MGDD-5-GDD-to-5-GDD} with this $\{9, 11\}$-GDD and $w = 15$ to obtain a 5-GDD of type $98^{15}$.
For the existence of 5-MGDDs of types $15^{9}$ and $15^{11}$, see \cite{AbelAssaf2002}.
For the existence of 5-GDDs of types $8^{15}$ and $10^{15}$, see Theorems~\ref{thm:5-GDD-Wei-Ge} and \ref{thm:5-GDD-g-u-direct}.

For types $106^{15}$, $118^{15}$ and $178^{15}$, we refer the reader to Lemma 3.16 of \cite{GeLing2005b},
which proves that there exists a 5-GDD of type $h^{11}$ for $h \equiv 2 \adfmod{4}$, $h \ge 66$.
By \cite[Theorem 1.3]{GeLing2005b}, there exists a 4-frame of type $6^{15}$,
i.e.\ a $4$-GDD ($V,\mathcal{G},\mathcal{B}$) of type $6^{15}$ in which the block set can be partitioned into
into $30$ partial parallel classes of size $21$ each of which partitions $V \setminus G$ for some $G \in \mathcal{G}$.
Also we have the 5-GDD of type $6^{15}$ from Theorem~\ref{thm:5-GDD-g-u-direct} as well as
5-GDDs of type $h^{15}$ for $h \equiv 0 \adfmod{4}$ from Theorem~\ref{thm:5-GDD-Wei-Ge}.
Then, by a straightforward adaptation of the proof of \cite[Lemma 3.16]{GeLing2005b}, we obtain 5-GDDs of type $g^{15}$ for
$g \in \{6n, 6n + 4, \dots, 8n - 2\}$ whenever there exists a TD$(15,n)$ with odd $n$.
This interval contains 106 and 118 when $n = 17$, and 178 when $n = 23$.
\adfQED


\section{GDDs with block size 5 and type $g^u m^1$}
\label{sec:GDDs-with-block-size-5-and-type-g-u-m-1}

Assuming they might be of some use for future research,
we collect together an assortment of directly constructed 5-GDDs of type $g^u m^1$ that we have found during our investigations.
\begin{theorem}\label{thm:5-GDD-g-u-m-1-direct}
There exist $5$-$\mathrm{GDD}$s of types
$ 2^{36} 10^1 $,
$ 4^{12} 8^1 $,
$ 4^{21} 20^1 $,
$ 4^{22} 8^1 $,
$ 4^{24} 24^1 $,
$ 4^{25} 12^1 $,
$ 4^{26} 20^1 $,
$ 4^{27} 8^1 $,
$ 4^{30} 12^1 $,
$ 4^{31} 20^1 $,
$ 4^{32} 8^1 $,
$ 4^{37} 8^1 $,
$ 6^{12} 2^1 $,
$ 7^{20} 19^1 $,
$ 8^{10} 4^1 $,
$ 8^{12} 16^1 $,
$ 8^{13} 12^1 $,
$ 8^{18} 12^1 $,
$ 8^{20} 4^1 $,
$ 8^{20} 24^1 $,
$ 16^8 24^1 $ and
$ 24^7 8^1 $.
\end{theorem}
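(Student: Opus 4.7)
The plan is to handle each listed type individually by an explicit construction analogous to those in Theorem~\ref{thm:5-GDD-g-u-direct}. For each type $g^u m^1$, I would prescribe a point set of size $gu+m$ together with an abelian group action (in almost all cases a cyclic group $\mathbb{Z}_{N}$ acting by translation on a suitable subset of the points, with some short orbits used to accommodate the group of size $m$), choose the group partition $\mathcal{G}$ to be a union of orbits compatible with the action, and then exhibit a short list of base blocks whose orbits under the action cover every transverse pair exactly once.

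The main work is the search phase: for each type one must find a set of base blocks whose translate orbits together constitute the desired 5-GDD. The natural heuristic, mirroring the $2^{111}$, $6^{15}$, $10^{15}$, $10^{33}$ constructions in Theorem~\ref{thm:5-GDD-g-u-direct}, is to fix $N = gu$ (or a small multiple thereof) and take the group containing the extra part of size $m$ to be a union of short orbits, letting the remaining $u$ groups be the residue classes modulo $u$; then run a backtracking / hill-climbing search over candidate starter blocks, each contributing an orbit of size $N$ (or $N/s$ for a stabilizer of order $s$), ensuring that the total multiset of differences hits every required cross-group difference exactly once. For instance, for type $2^{36}10^1$ one can take $N = 72$ with point set $\{0,\dots,81\}$, groups $\{i, i+36\}$ for $0 \le i < 36$ plus a group of size $10$ drawn from short orbits, etc., and analogously for the other types; a similar template works for the $4^u m^1$, $8^u m^1$ and $6^{12}2^1$ entries with suitable choices of $N$.

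Once a candidate starter set is located, verification is a purely finite counting check: compute the development of the base blocks under the prescribed action and confirm that (i) every block has size $5$ and meets each group in at most one point, and (ii) the list of all ordered pairs drawn from distinct groups is covered exactly once. This is identical in spirit to the \verb|ADFvfyParStart| annotations attached to each construction in Theorem~\ref{thm:5-GDD-g-u-direct}.

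The principal obstacle is not the proof but the search: for some of the larger types on the list (for example $4^{37}8^1$ on $156$ points or $7^{20}19^1$ on $159$ points) the difference family search space is very large, and one may need to impose additional structure, such as requiring a larger stabilizing automorphism, a non-cyclic abelian group such as $\mathbb{Z}_a \times \mathbb{Z}_b$, or a mixed action combining long and short orbits, in order to reduce the number of base blocks to a tractable level. Once a working set of base blocks is displayed for each of the twenty-two types, the theorem follows directly by quoting each construction with its generating mapping, as in Theorem~\ref{thm:5-GDD-g-u-direct}.
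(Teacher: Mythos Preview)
Your plan is exactly the approach the paper takes: for each of the twenty-two types it fixes a cyclic action on the point set (with the extra group of size $m$ carried on a separate short orbit), chooses the $u$ groups of size $g$ as residue classes, and then lists a handful of base blocks whose development under the stated mapping yields the design; your $2^{36}10^1$ template even matches the paper's parameters verbatim.  The only caveat is that for a theorem of this kind the explicit base blocks \emph{are} the proof---there is no auxiliary argument---so your proposal is a correct description of the method but is not itself a proof until the twenty-two starter sets are actually exhibited and verified.
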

\begin{proof}
%

\adfhide{
$ 2^{36} 10^1 $,
$ 4^{12} 8^1 $,
$ 4^{21} 20^1 $,
$ 4^{22} 8^1 $,
$ 4^{24} 24^1 $,
$ 4^{25} 12^1 $,
$ 4^{26} 20^1 $,
$ 4^{27} 8^1 $,
$ 4^{30} 12^1 $,
$ 4^{31} 20^1 $,
$ 4^{32} 8^1 $,
$ 4^{37} 8^1 $,
$ 6^{12} 2^1 $,
$ 7^{20} 19^1 $,
$ 8^{10} 4^1 $,
$ 8^{12} 16^1 $,
$ 8^{13} 12^1 $,
$ 8^{18} 12^1 $,
$ 8^{20} 4^1 $,
$ 8^{20} 24^1 $,
$ 16^8 24^1 $ and
$ 24^7 8^1 $.
}

\adfDgap
\noindent{\boldmath $ 2^{36} 10^{1} $}~
With the point set $\{0, 1, \dots, 81\}$ partitioned into
 residue classes modulo $36$ for $\{0, 1, \dots, 71\}$, and
 $\{72, 73, \dots, 81\}$,
 the design is generated from

\adfLgap {\adfBfont
$\{21, 76, 30, 35, 0\}$,
$\{38, 9, 33, 7, 30\}$,
$\{65, 23, 8, 15, 4\}$,\adfsplit
$\{32, 79, 55, 30, 61\}$,
$\{72, 63, 9, 64, 54\}$,
$\{1, 35, 80, 60, 34\}$,\adfsplit
$\{9, 61, 28, 21, 65\}$,
$\{6, 12, 28, 40, 60\}$,
$\{0, 14, 59, 69, 73\}$

}
\adfLgap \noindent by the mapping:
$x \mapsto x + 2 j \adfmod{72}$ for $x < 72$,
$x \mapsto (x - 72 + 5 j \adfmod{10}) + 72$ for $x \ge 72$,
$0 \le j < 36$.
\ADFvfyParStart{(82, ((9, 36, ((72, 2), (10, 5)))), ((2, 36), (10, 1)))} 

\adfDgap
\noindent{\boldmath $ 4^{12} 8^{1} $}~
With the point set $\{0, 1, \dots, 55\}$ partitioned into
 residue classes modulo $12$ for $\{0, 1, \dots, 47\}$, and
 $\{48, 49, \dots, 55\}$,
 the design is generated from

\adfLgap {\adfBfont
$\{0, 1, 3, 11, 32\}$,
$\{0, 4, 9, 34, 48\}$,
$\{0, 6, 26, 41, 51\}$

}
\adfLgap \noindent by the mapping:
$x \mapsto x +  j \adfmod{48}$ for $x < 48$,
$x \mapsto (x +  j \adfmod{8}) + 48$ for $x \ge 48$,
$0 \le j < 48$.
\ADFvfyParStart{(56, ((3, 48, ((48, 1), (8, 1)))), ((4, 12), (8, 1)))} 

\adfDgap
\noindent{\boldmath $ 4^{21} 20^{1} $}~
With the point set $\{0, 1, \dots, 103\}$ partitioned into
 residue classes modulo $21$ for $\{0, 1, \dots, 83\}$, and
 $\{84, 85, \dots, 103\}$,
 the design is generated from

\adfLgap {\adfBfont
$\{84, 57, 27, 22, 28\}$,
$\{13, 86, 79, 82, 32\}$,
$\{6, 65, 52, 63, 92\}$,\adfsplit
$\{98, 1, 10, 24, 63\}$,
$\{77, 44, 3, 70, 85\}$,
$\{0, 4, 16, 56, 64\}$

}
\adfLgap \noindent by the mapping:
$x \mapsto x +  j \adfmod{84}$ for $x < 84$,
$x \mapsto (x - 84 + 5 j \adfmod{20}) + 84$ for $x \ge 84$,
$0 \le j < 84$.
\ADFvfyParStart{(104, ((6, 84, ((84, 1), (20, 5)))), ((4, 21), (20, 1)))} 

\adfDgap
\noindent{\boldmath $ 4^{22} 8^{1} $}~
With the point set $\{0, 1, \dots, 95\}$ partitioned into
 residue classes modulo $22$ for $\{0, 1, \dots, 87\}$, and
 $\{88, 89, \dots, 95\}$,
 the design is generated from

\adfLgap {\adfBfont
$\{70, 92, 1, 27, 15\}$,
$\{89, 73, 66, 69, 39\}$,
$\{53, 55, 5, 76, 37\}$,\adfsplit
$\{43, 53, 12, 18, 54\}$,
$\{0, 5, 13, 64, 73\}$

}
\adfLgap \noindent by the mapping:
$x \mapsto x +  j \adfmod{88}$ for $x < 88$,
$x \mapsto (x +  j \adfmod{8}) + 88$ for $x \ge 88$,
$0 \le j < 88$.
\ADFvfyParStart{(96, ((5, 88, ((88, 1), (8, 1)))), ((4, 22), (8, 1)))} 

\adfDgap
\noindent{\boldmath $ 4^{24} 24^{1} $}~
With the point set $\{0, 1, \dots, 119\}$ partitioned into
 residue classes modulo $24$ for $\{0, 1, \dots, 95\}$, and
 $\{96, 97, \dots, 119\}$,
 the design is generated from

\adfLgap {\adfBfont
$\{72, 99, 61, 64, 49\}$,
$\{13, 65, 45, 43, 101\}$,
$\{85, 72, 109, 44, 54\}$,\adfsplit
$\{38, 106, 84, 77, 63\}$,
$\{26, 53, 102, 87, 93\}$,
$\{112, 82, 46, 65, 91\}$,\adfsplit
$\{0, 1, 5, 38, 54\}$

}
\adfLgap \noindent by the mapping:
$x \mapsto x +  j \adfmod{96}$ for $x < 96$,
$x \mapsto (x +  j \adfmod{24}) + 96$ for $x \ge 96$,
$0 \le j < 96$.
\ADFvfyParStart{(120, ((7, 96, ((96, 1), (24, 1)))), ((4, 24), (24, 1)))} 

\adfDgap
\noindent{\boldmath $ 4^{25} 12^{1} $}~
With the point set $\{0, 1, \dots, 111\}$ partitioned into
 residue classes modulo $25$ for $\{0, 1, \dots, 99\}$, and
 $\{100, 101, \dots, 111\}$,
 the design is generated from

\adfLgap {\adfBfont
$\{30, 84, 46, 16, 72\}$,
$\{0, 110, 99, 94, 9\}$,
$\{58, 105, 77, 60, 95\}$,\adfsplit
$\{1, 44, 67, 109, 22\}$,
$\{8, 4, 75, 28, 35\}$,
$\{0, 3, 11, 39, 52\}$

}
\adfLgap \noindent by the mapping:
$x \mapsto x +  j \adfmod{100}$ for $x < 100$,
$x \mapsto (x - 100 + 3 j \adfmod{12}) + 100$ for $x \ge 100$,
$0 \le j < 100$.
\ADFvfyParStart{(112, ((6, 100, ((100, 1), (12, 3)))), ((4, 25), (12, 1)))} 

\adfDgap
\noindent{\boldmath $ 4^{26} 20^{1} $}~
With the point set $\{0, 1, \dots, 123\}$ partitioned into
 residue classes modulo $26$ for $\{0, 1, \dots, 103\}$, and
 $\{104, 105, \dots, 123\}$,
 the design is generated from

\adfLgap {\adfBfont
$\{23, 9, 54, 110, 8\}$,
$\{61, 123, 58, 88, 67\}$,
$\{104, 2, 9, 95, 72\}$,\adfsplit
$\{17, 78, 27, 122, 56\}$,
$\{76, 106, 59, 22, 57\}$,
$\{13, 68, 21, 26, 92\}$,\adfsplit
$\{0, 4, 16, 36, 64\}$

}
\adfLgap \noindent by the mapping:
$x \mapsto x +  j \adfmod{104}$ for $x < 104$,
$x \mapsto (x - 104 + 5 j \adfmod{20}) + 104$ for $x \ge 104$,
$0 \le j < 104$.
\ADFvfyParStart{(124, ((7, 104, ((104, 1), (20, 5)))), ((4, 26), (20, 1)))} 

\adfDgap
\noindent{\boldmath $ 4^{27} 8^{1} $}~
With the point set $\{0, 1, \dots, 115\}$ partitioned into
 residue classes modulo $27$ for $\{0, 1, \dots, 107\}$, and
 $\{108, 109, \dots, 115\}$,
 the design is generated from

\adfLgap {\adfBfont
$\{39, 18, 67, 92, 87\}$,
$\{114, 43, 30, 72, 65\}$,
$\{1, 63, 77, 73, 79\}$,\adfsplit
$\{44, 113, 1, 62, 59\}$,
$\{69, 29, 5, 38, 46\}$,
$\{0, 1, 12, 38, 57\}$

}
\adfLgap \noindent by the mapping:
$x \mapsto x +  j \adfmod{108}$ for $x < 108$,
$x \mapsto (x - 108 + 2 j \adfmod{8}) + 108$ for $x \ge 108$,
$0 \le j < 108$.
\ADFvfyParStart{(116, ((6, 108, ((108, 1), (8, 2)))), ((4, 27), (8, 1)))} 

\adfDgap
\noindent{\boldmath $ 4^{30} 12^{1} $}~
With the point set $\{0, 1, \dots, 131\}$ partitioned into
 residue classes modulo $30$ for $\{0, 1, \dots, 119\}$, and
 $\{120, 121, \dots, 131\}$,
 the design is generated from

\adfLgap {\adfBfont
$\{72, 2, 10, 117, 78\}$,
$\{108, 2, 104, 128, 35\}$,
$\{30, 116, 7, 27, 68\}$,\adfsplit
$\{8, 102, 75, 104, 92\}$,
$\{130, 83, 90, 115, 41\}$,
$\{27, 83, 6, 28, 43\}$,\adfsplit
$\{0, 9, 66, 101, 127\}$

}
\adfLgap \noindent by the mapping:
$x \mapsto x +  j \adfmod{120}$ for $x < 120$,
$x \mapsto (x +  j \adfmod{12}) + 120$ for $x \ge 120$,
$0 \le j < 120$.
\ADFvfyParStart{(132, ((7, 120, ((120, 1), (12, 1)))), ((4, 30), (12, 1)))} 

\adfDgap
\noindent{\boldmath $ 4^{31} 20^{1} $}~
With the point set $\{0, 1, \dots, 143\}$ partitioned into
 residue classes modulo $31$ for $\{0, 1, \dots, 123\}$, and
 $\{124, 125, \dots, 143\}$,
 the design is generated from

\adfLgap {\adfBfont
$\{106, 98, 1, 50, 64\}$,
$\{76, 59, 53, 127, 6\}$,
$\{92, 140, 19, 9, 54\}$,\adfsplit
$\{23, 101, 44, 133, 110\}$,
$\{65, 67, 80, 141, 62\}$,
$\{124, 112, 18, 17, 43\}$,\adfsplit
$\{100, 93, 8, 19, 41\}$,
$\{0, 4, 16, 44, 104\}$

}
\adfLgap \noindent by the mapping:
$x \mapsto x +  j \adfmod{124}$ for $x < 124$,
$x \mapsto (x - 124 + 5 j \adfmod{20}) + 124$ for $x \ge 124$,
$0 \le j < 124$.
\ADFvfyParStart{(144, ((8, 124, ((124, 1), (20, 5)))), ((4, 31), (20, 1)))} 

\adfDgap
\noindent{\boldmath $ 4^{32} 8^{1} $}~
With the point set $\{0, 1, \dots, 135\}$ partitioned into
 residue classes modulo $32$ for $\{0, 1, \dots, 127\}$, and
 $\{128, 129, \dots, 135\}$,
 the design is generated from

\adfLgap {\adfBfont
$\{95, 104, 64, 21, 74\}$,
$\{113, 53, 79, 56, 12\}$,
$\{20, 22, 4, 70, 42\}$,\adfsplit
$\{22, 113, 125, 126, 121\}$,
$\{42, 57, 130, 6, 115\}$,
$\{72, 133, 3, 55, 66\}$,\adfsplit
$\{0, 7, 42, 56, 89\}$

}
\adfLgap \noindent by the mapping:
$x \mapsto x +  j \adfmod{128}$ for $x < 128$,
$x \mapsto (x +  j \adfmod{8}) + 128$ for $x \ge 128$,
$0 \le j < 128$.
\ADFvfyParStart{(136, ((7, 128, ((128, 1), (8, 1)))), ((4, 32), (8, 1)))} 

\adfDgap
\noindent{\boldmath $ 4^{37} 8^{1} $}~
With the point set $\{0, 1, \dots, 155\}$ partitioned into
 residue classes modulo $37$ for $\{0, 1, \dots, 147\}$, and
 $\{148, 149, \dots, 155\}$,
 the design is generated from

\adfLgap {\adfBfont
$\{86, 63, 119, 43, 147\}$,
$\{21, 22, 53, 15, 13\}$,
$\{50, 34, 120, 37, 130\}$,\adfsplit
$\{149, 122, 88, 53, 147\}$,
$\{14, 133, 112, 150, 3\}$,
$\{60, 13, 9, 108, 135\}$,\adfsplit
$\{32, 113, 47, 77, 89\}$,
$\{0, 5, 19, 90, 107\}$

}
\adfLgap \noindent by the mapping:
$x \mapsto x +  j \adfmod{148}$ for $x < 148$,
$x \mapsto (x - 148 + 2 j \adfmod{8}) + 148$ for $x \ge 148$,
$0 \le j < 148$.
\ADFvfyParStart{(156, ((8, 148, ((148, 1), (8, 2)))), ((4, 37), (8, 1)))} 

\adfDgap
\noindent{\boldmath $ 6^{12} 2^{1} $}~
With the point set $\{0, 1, \dots, 73\}$ partitioned into
 residue classes modulo $12$ for $\{0, 1, \dots, 71\}$, and
 $\{72, 73\}$,
 the design is generated from

\adfLgap {\adfBfont
$\{32, 70, 25, 41, 21\}$,
$\{14, 31, 46, 56, 0\}$,
$\{9, 11, 48, 39, 70\}$,\adfsplit
$\{64, 58, 60, 41, 63\}$,
$\{26, 55, 21, 34, 54\}$,
$\{57, 72, 32, 47, 50\}$,\adfsplit
$\{0, 19, 37, 45, 51\}$

}
\adfLgap \noindent by the mapping:
$x \mapsto x + 2 j \adfmod{72}$ for $x < 72$,
$x \mapsto (x +  j \adfmod{2}) + 72$ for $x \ge 72$,
$0 \le j < 36$.
\ADFvfyParStart{(74, ((7, 36, ((72, 2), (2, 1)))), ((6, 12), (2, 1)))} 

\adfDgap
\noindent{\boldmath $ 7^{20} 19^{1} $}~
With the point set $\{0, 1, \dots, 158\}$ partitioned into
 residue classes modulo $19$ for $\{0, 1, \dots, 132\}$,
 $\{133, 134, \dots, 139\}$, and
 $\{140, 141, \dots, 158\}$,
 the design is generated from

\adfLgap {\adfBfont
$\{64, 48, 14, 54, 115\}$,
$\{39, 2, 156, 51, 94\}$,
$\{39, 101, 24, 128, 21\}$,\adfsplit
$\{0, 4, 91, 98, 145\}$,
$\{0, 1, 14, 22, 147\}$,
$\{0, 2, 25, 30, 88\}$,\adfsplit
$\{0, 17, 48, 81, 133\}$,
$\{0, 9, 68, 122, 140\}$,
$\{0, 24, 97, 138, 158\}$

}
\adfLgap \noindent by the mapping:
$x \mapsto x +  j \adfmod{133}$ for $x < 133$,
$x \mapsto (x +  j \adfmod{7}) + 133$ for $133 \le x < 140$,
$x \mapsto (x - 140 +  j \adfmod{19}) + 140$ for $x \ge 140$,
$0 \le j < 133$.
\ADFvfyParStart{(159, ((9, 133, ((133, 1), (7, 1), (19, 1)))), ((7, 19), (7, 1), (19, 1)))} 

\adfDgap
\noindent{\boldmath $ 8^{10} 4^{1} $}~
With the point set $\{0, 1, \dots, 83\}$ partitioned into
 residue classes modulo $10$ for $\{0, 1, \dots, 79\}$, and
 $\{80, 81, 82, 83\}$,
 the design is generated from

\adfLgap {\adfBfont
$\{56, 2, 24, 70, 3\}$,
$\{80, 42, 19, 60, 57\}$,
$\{14, 49, 6, 30, 77\}$,\adfsplit
$\{0, 2, 6, 31, 75\}$

}
\adfLgap \noindent by the mapping:
$x \mapsto x +  j \adfmod{80}$ for $x < 80$,
$x \mapsto (x +  j \adfmod{4}) + 80$ for $x \ge 80$,
$0 \le j < 80$.
\ADFvfyParStart{(84, ((4, 80, ((80, 1), (4, 1)))), ((8, 10), (4, 1)))} 

\adfDgap
\noindent{\boldmath $ 8^{12} 16^{1} $}~
With the point set $\{0, 1, \dots, 111\}$ partitioned into
 residue classes modulo $12$ for $\{0, 1, \dots, 95\}$, and
 $\{96, 97, \dots, 111\}$,
 the design is generated from

\adfLgap {\adfBfont
$\{34, 42, 100, 36, 59\}$,
$\{92, 89, 55, 85, 36\}$,
$\{88, 3, 12, 66, 103\}$,\adfsplit
$\{111, 28, 66, 56, 1\}$,
$\{43, 4, 22, 48, 108\}$,
$\{0, 1, 14, 46, 81\}$

}
\adfLgap \noindent by the mapping:
$x \mapsto x +  j \adfmod{96}$ for $x < 96$,
$x \mapsto (x +  j \adfmod{16}) + 96$ for $x \ge 96$,
$0 \le j < 96$.
\ADFvfyParStart{(112, ((6, 96, ((96, 1), (16, 1)))), ((8, 12), (16, 1)))} 

\adfDgap
\noindent{\boldmath $ 8^{13} 12^{1} $}~
With the point set $\{0, 1, \dots, 115\}$ partitioned into
 residue classes modulo $13$ for $\{0, 1, \dots, 103\}$, and
 $\{104, 105, \dots, 115\}$,
 the design is generated from

\adfLgap {\adfBfont
$\{52, 16, 14, 24, 64\}$,
$\{38, 99, 70, 95, 79\}$,
$\{90, 5, 0, 109, 87\}$,\adfsplit
$\{41, 103, 10, 113, 68\}$,
$\{35, 2, 17, 72, 105\}$,
$\{0, 1, 7, 60, 81\}$

}
\adfLgap \noindent by the mapping:
$x \mapsto x +  j \adfmod{104}$ for $x < 104$,
$x \mapsto (x - 104 + 3 j \adfmod{12}) + 104$ for $x \ge 104$,
$0 \le j < 104$.
\ADFvfyParStart{(116, ((6, 104, ((104, 1), (12, 3)))), ((8, 13), (12, 1)))} 

\adfDgap
\noindent{\boldmath $ 8^{18} 12^{1} $}~
With the point set $\{0, 1, \dots, 155\}$ partitioned into
 residue classes modulo $18$ for $\{0, 1, \dots, 143\}$, and
 $\{144, 145, \dots, 155\}$,
 the design is generated from

\adfLgap {\adfBfont
$\{49, 57, 14, 17, 15\}$,
$\{137, 122, 77, 61, 55\}$,
$\{52, 21, 14, 65, 150\}$,\adfsplit
$\{56, 79, 60, 23, 32\}$,
$\{6, 84, 32, 11, 59\}$,
$\{53, 12, 92, 152, 142\}$,\adfsplit
$\{2, 71, 13, 83, 100\}$,
$\{0, 10, 30, 95, 149\}$

}
\adfLgap \noindent by the mapping:
$x \mapsto x +  j \adfmod{144}$ for $x < 144$,
$x \mapsto (x +  j \adfmod{12}) + 144$ for $x \ge 144$,
$0 \le j < 144$.
\ADFvfyParStart{(156, ((8, 144, ((144, 1), (12, 1)))), ((8, 18), (12, 1)))} 

\adfDgap
\noindent{\boldmath $ 8^{20} 4^{1} $}~
With the point set $\{0, 1, \dots, 163\}$ partitioned into
 residue classes modulo $20$ for $\{0, 1, \dots, 159\}$, and
 $\{160, 161, 162, 163\}$,
 the design is generated from

\adfLgap {\adfBfont
$\{70, 95, 117, 58, 51\}$,
$\{9, 133, 124, 148, 61\}$,
$\{88, 99, 57, 3, 89\}$,\adfsplit
$\{67, 144, 10, 136, 14\}$,
$\{13, 117, 94, 123, 156\}$,
$\{15, 66, 80, 64, 148\}$,\adfsplit
$\{56, 99, 10, 38, 51\}$,
$\{0, 3, 58, 93, 160\}$

}
\adfLgap \noindent by the mapping:
$x \mapsto x +  j \adfmod{160}$ for $x < 160$,
$x \mapsto (x +  j \adfmod{4}) + 160$ for $x \ge 160$,
$0 \le j < 160$.
\ADFvfyParStart{(164, ((8, 160, ((160, 1), (4, 1)))), ((8, 20), (4, 1)))} 

\adfDgap
\noindent{\boldmath $ 8^{20} 24^{1} $}~
With the point set $\{0, 1, \dots, 183\}$ partitioned into
 residue classes modulo $20$ for $\{0, 1, \dots, 159\}$, and
 $\{160, 161, \dots, 183\}$,
 the design is generated from

\adfLgap {\adfBfont
$\{142, 54, 150, 133, 40\}$,
$\{172, 8, 137, 115, 2\}$,
$\{112, 17, 6, 69, 153\}$,\adfsplit
$\{72, 114, 39, 181, 129\}$,
$\{78, 137, 183, 114, 116\}$,
$\{46, 19, 145, 176, 108\}$,\adfsplit
$\{89, 40, 179, 43, 134\}$,
$\{125, 52, 120, 42, 174\}$,
$\{35, 54, 6, 36, 140\}$,\adfsplit
$\{0, 4, 16, 125, 132\}$

}
\adfLgap \noindent by the mapping:
$x \mapsto x +  j \adfmod{160}$ for $x < 160$,
$x \mapsto (x - 160 + 3 j \adfmod{24}) + 160$ for $x \ge 160$,
$0 \le j < 160$.
\ADFvfyParStart{(184, ((10, 160, ((160, 1), (24, 3)))), ((8, 20), (24, 1)))} 

\adfDgap
\noindent{\boldmath $ 16^{8} 24^{1} $}~
With the point set $\{0, 1, \dots, 151\}$ partitioned into
 residue classes modulo $8$ for $\{0, 1, \dots, 127\}$, and
 $\{128, 129, \dots, 151\}$,
 the design is generated from

\adfLgap {\adfBfont
$\{62, 141, 95, 9, 19\}$,
$\{94, 11, 93, 55, 146\}$,
$\{18, 115, 0, 15, 148\}$,\adfsplit
$\{30, 77, 9, 23, 96\}$,
$\{137, 31, 22, 81, 101\}$,
$\{3, 80, 106, 102, 135\}$,\adfsplit
$\{40, 70, 3, 5, 97\}$,
$\{0, 5, 11, 116, 145\}$

}
\adfLgap \noindent by the mapping:
$x \mapsto x +  j \adfmod{128}$ for $x < 128$,
$x \mapsto (x - 128 + 3 j \adfmod{24}) + 128$ for $x \ge 128$,
$0 \le j < 128$.
\ADFvfyParStart{(152, ((8, 128, ((128, 1), (24, 3)))), ((16, 8), (24, 1)))} 

\adfDgap
\noindent{\boldmath $ 24^{7} 8^{1} $}~
With the point set $\{0, 1, \dots, 175\}$ partitioned into
 residue classes modulo $7$ for $\{0, 1, \dots, 167\}$, and
 $\{168, 169, \dots, 175\}$,
 the design is generated from

\adfLgap {\adfBfont
$\{135, 1, 159, 70, 81\}$,
$\{13, 63, 15, 54, 32\}$,
$\{159, 28, 29, 3, 114\}$,\adfsplit
$\{107, 162, 91, 87, 55\}$,
$\{127, 17, 12, 173, 104\}$,
$\{115, 161, 55, 88, 155\}$,\adfsplit
$\{90, 16, 24, 120, 133\}$,
$\{0, 3, 18, 47, 170\}$

}
\adfLgap \noindent by the mapping:
$x \mapsto x +  j \adfmod{168}$ for $x < 168$,
$x \mapsto (x +  j \adfmod{8}) + 168$ for $x \ge 168$,
$0 \le j < 168$.
\ADFvfyParStart{(176, ((8, 168, ((168, 1), (8, 1)))), ((24, 7), (8, 1)))} 
%
\end{proof}


\section*{ORCID}

\noindent A. D. Forbes     \url{https://orcid.org/0000-0003-3805-7056}



\begin{thebibliography}{99}

\bibitem{AbelAssaf2002} R. J. R. Abel and A. M. Assaf,
    Modified group divisible designs with block size 5 and $\lambda$ = 1,
    \textit{Discrete Math.} \textbf{256} (2002), 1--22.

\bibitem{AbelAssafBluskovGreigShalaby2010} R. J. R. Abel, A. M. Assaf, I. Bluskov, M. Greig and N. Shalaby,
    New results on GDDs, covering, packing and directable designs with block Size 5,
    \textit{J. Combin. Des} \textbf{18} (2010), 337--368.

\bibitem{AbelBennettGreig2007} R. J. R. Abel, F. E. Bennett and M. Greig,
    PBD-Closure,
    \textit{Handbook of Combinatorial Designs}, 2nd edition (C. J. Colbourn and J. H. Dinitz, eds),
    Chapman \& Hall/CRC Press, Boca Raton, 2007, 255--260.

\bibitem{AbelColbournDinitz2007} R. J. R. Abel, C. J. Colbourn and J. H. Dinitz,
    Mutually Orthogonal Latin Squares (MOLS),
    \textit{Handbook of Combinatorial Designs}, 2nd edition (C. J. Colbourn and J. H. Dinitz, eds),
    Chapman \& Hall/CRC Press, Boca Raton, 2007, 160--193.

\bibitem{AbelGeGreigLing2009} R. J. R. Abel, G. Ge, M. Greig, A. C. H. Ling,
    Further results on $(v, {5, w^*}, 1)$-PBDs,
    \textit{Discrete Math.} \textbf{309} (2009), 2323--2339.

\bibitem{BennettChangGeGreig2004} F. E. Bennett, Y. Chang, G. Ge, M. Greig,
    Existence of $(v, {5, w^*}, 1)$-PBDs,
    \textit{Discrete Maths.} \textbf{279}, 2004, 61--105.

\bibitem{BrouwerSchrijverHanani1977} A. E. Brouwer, A. Schrijver and H. Hanani,
    Group divisible designs with block size four,
    \textit{Discrete Math.} \textbf{20} (1977), 1--10.

\bibitem{Forbes2020P} A. D. Forbes,
    Pentagonal geometries with block sizes 3, 4 and 5,
    \textit{J. Combin. Des.} \textbf{29} (2021), 307--330; 
    preprint: \url{https://arxiv.org/abs/2006.15734v3}.

\bibitem{Ge2007} G. Ge,
    Group Divisible Designs,
    \textit{Handbook of Combinatorial Designs}, 2nd edition (C. J. Colbourn and J. H. Dinitz, eds),
    Chapman \& Hall/CRC Press, Boca Raton, 2007, 255--260.

\bibitem{GeLing2004c} G. Ge and A. C. H. Ling,
    Some More 5-GDDs and Optimal $(v,5,1)$-Packings,
    \textit{J. Combin. Des.} \textbf{12} (2004), 132--141).

\bibitem{GeLing2005b} G. Ge and A. C. H. Ling,
    Asymptotic results on the existence of 4-RGDDs and uniform 5-GDDs,
    \textit{J. Combin. Des.} \textbf{13} (2005), 222--237.

\bibitem{GeReesZhu2002} G. Ge, R. S. Rees and L. Zhu,
    Group-Divisible Designs with Block Size Four and Group-Type $g^u m^1$ with $m$ as Large or as Small as Possible,
    \textit{J. Combin. Theory, Series A} \textbf{98} (2002), 357--376.

\bibitem{GreigMullen2007} M. Greig and R. C. Mullin, PBDs: Recursive Constructions,
    \textit{Handbook of Combinatorial Designs}, 2nd edition (C. J. Colbourn and J. H. Dinitz, eds),
    Chapman \& Hall/CRC Press, Boca Raton, 2007, 236--246.

\bibitem{Hanani1975} H. Hanani,
   Balanced incomplete block designs and related designs,
   \textit{Discrete Math.} \textbf{11} (1975), 255--369.

\bibitem{Rees2000} R. S. Rees,
    Group-divisible designs with block size $k$ having $k + 1$ groups for $k = 4, 5$,
    \textit{J. Combin. Des.} \textbf{8} (2000), 363--386.

\bibitem{WeiGe2014} H. Wei and G. Ge,
    Some more 5-GDDs, 4-frames and 4-RGDDs,
    \textit{Discrete Mathematics} \textbf{336} (2014), 7--21.

\bibitem{WilsonRM1972} R. M. Wilson,
    An existence theory for pairwise balanced designs I: Composition theorems and morphisms,
    \textit{J. Combin. Theory Ser. A} \textbf{13} (1972), 220--245.

\bibitem{YinLingColbournAbel1997} J. Yin, A. C. H. Ling, C. J. Colbourn and R. J. R. Abel,
    The existence of uniform 5-GDDs,
    \textit{J. Combin. Des.} \textbf{5} (1997), 275--299.

\end{thebibliography}
\end{document}